\declaretheorem[numbered=no,
name=Theorem A]{theorem A}
\declaretheorem[numbered=no,
name=Theorem B]{theorem B}
\declaretheorem[numbered=no,
name=Theorem C]{theorem C}
\declaretheorem[numbered=no,
name=Theorem D]{theorem D}
\declaretheorem[numbered=no,
name=Theorem E]{theorem E}
\numberwithin{equation}{section}
\DeclareMathOperator{\RE}{Re}
\theoremstyle{plain}
\newtheorem{theorem}{Theorem}[section]
\newtheorem{corollary}[theorem]{Corollary}
\newtheorem{lemma}{Lemma}[section]
\newtheorem{assumption}[theorem]{Assumption}
\theoremstyle{definition}
\newtheorem{definition}[theorem]{Definition}
\theoremstyle{remark}
\newtheorem{remark}{Remark}[section]
\begin{document}
\title{Toeplitz determinants on bounded starlike circular domain in $\mathbb{C}^n$}
\author{Surya Giri$^{1}$ and S. Sivaprasad Kumar$^*$}


\date{}


	

\maketitle	
	
\begin{abstract}
  In this paper, we derive the sharp bounds of Toeplitz determinants for a class of holomorphic mappings on the bounded starlike circular domain $\Omega$ in $\mathbb{C}^n$, which extend certain known bounds for various subclasses of normalized analytic univalent functions in the unit disk to higher dimensions.
\end{abstract}

\vspace{0.5cm}
	\noindent \textit{Keywords:}  Holomorphic mapping; Toeplitz determinant; Coefficient inequality; bounded starlike circular domain.
	\\
	\\
	\noindent \textit{AMS Subject Classification:} 32H02, 30C45.
\maketitle

\section{Introduction}
   Let $\mathcal{S}$ be the class of normalized univalent holomorphic functions on the unit disk $\mathbb{U}$ in $\mathbb{C}$ of the form
   $$   g(z) = z + \sum_{n=2}^\infty b_n z^n . $$
   Let $\mathcal{S}^*$, $\mathcal{S}^*(\alpha)$ and $\mathcal{SS}^*(\beta)$ represent the subclasses of $\mathcal{S}$ containing the starlike functions, starlike functions of order $\alpha$ ($0\leq \alpha <1$) and strongly starlike functions of order $\beta$ ($0 < \beta \leq 1$), respectively (see \cite{8}).
   Ali et al. \cite{Ali} obtained the bounds of Toeplitz determinants formed over the coefficients of $g \in \mathcal{S}$.
     For $g(z) = z + \sum_{n=2}^\infty a_n z^n$, the Toeplitz matrix is given by
\begin{equation}
     T_{m,n}(g)= \begin{bmatrix}
	b_n & b_{n+1} & \cdots & b_{n+m-1} \\
	b_{n+1} & b_n & \cdots & b_{n+m-2}\\
	\vdots & \vdots & \vdots & \vdots\\
    b_{n+m-1} & b_{n+m-2} & \cdots & b_n\\
	\end{bmatrix}.
\end{equation}
   Thus, the second order Toeplitz determinant is
\begin{equation}
    \det{T_{2,2}(g)}=  b_2^2  -  b_3^2
\end{equation}
   and the third order Toeplitz determinant is given by
\begin{equation}\label{T31}
    \det T_{3,1}(g)= 2 b_2^{2} b_3 - 2 b_2^2-  b_3^2+1.
\end{equation}
   According to Ye and Lim \cite{LHLIM}, any $n \times n$ matrix over $\mathbb{C}$ generically can be written as the product of some Toeplitz matrices or Hankel matrices.
    Toeplitz matrices and Toeplitz determinants have many applications in pure as well as in applied mathematics \cite{Tpltz}.
    For more details of applications in various areas of mathematics, we refer \cite{LHLIM} and the references cited therein.

    For the class of normalized starlike functions, Ali et al. \cite{Ali} obtained the following result.
\begin{theorem A}\label{thmAH3}\cite{Ali}
   If $g \in \mathcal{S}^*$, then the following sharp bounds hold:
   $$ \vert \det T_{2,2}(g) \vert \leq 13 \;\;  \text{and} \;\; \vert \det T_{3,1}(g) \vert \leq  24.$$
\end{theorem A}
     Ahuja et al. \cite{Ahuja} determined the following estimates for the class of normalized starlike functions of order $\alpha$.
\begin{theorem B}\label{thmB}\cite{Ahuja}
   If $ g \in \mathcal{S}^*(\alpha)$, then
   $$ \vert \det T_{2,2}(g) \vert \leq (1 - \alpha)^2 ( 4 \alpha^2 - 12 \alpha + 13) $$
  and for $\alpha \in [0,2/3]$,
   $$ \vert \det T_{3,1}(g) \vert \leq 12 \alpha^4 -52 \alpha^3 + 91 \alpha^2 -74 \alpha + 24.$$
   All these estimates are sharp.
\end{theorem B}
  The following result directly follows from \cite[Theorem 1]{Ahuja} and \cite[Theorem 3]{Ahuja} for the class $\mathcal{SS}^*(\beta)$.
\begin{theorem C}
    If $g\in \mathcal{SS}^*(\beta)$, then for $\beta \in [1/3,1]$, the following sharp inequalities hold:
    $$\vert \det T_{2,2}(g) \vert  \leq 9 \beta^4 + 4 \beta^2 \quad \text{and} \quad \vert \det T_{3,1}(g) \vert \leq  15 \beta^4 + 8 \beta^2 + 1.  $$
\end{theorem C}
   Cartan \cite{GraKohH3} stated that the Bieberbach conjecture for the class $\mathcal{S}$ does not hold in case of several complex variables. There are various counterexamples, which show that many results in the Geometric function theory of one complex variable are not applicable for several complex variables (see \cite{GraKohH3}). Many researchers have focused on generalizing the coefficients inequalities for the subclasses of $\mathcal{S}$ in higher dimensions.
   Recently, Giri and Kumar~\cite{GiriH1} generalized the above results on the unit ball in a complex Banach space and on the unit polydisc in $\mathbb{C}^n.$ On the bounded starlike circular domain $\Omega \subset \mathbb{C}^n$, Liu and Xu~\cite{LiuXu3} (see also~\cite{XuH3}) solved the Fekete-Szeg\"{o} problem for a subclass of starlike mappings of order $\alpha$. Xu~\cite{Xu2H3} generalized the work in~\cite{XuH3} to a subclass of holomorphic mappings on the same domain $\Omega.$
   Contrary to the Fekete-Szeg\"{o} problems, very few results are known for the inequalities of homogeneous expansions for subclasses of biholomorphic mappings in several complex variables.  Results related to the bounds for the coefficients of various subclasses of holomorphic mappings in higher dimensions were obtained by Bracci et al.~\cite{Bracc}, Graham et al.~\cite{Grah2H3}, Graham et al.~\cite{GrahH3}, Graham et al.\cite{GrahLcH}, Hamada and Honda~\cite{Ham5H3}, Hamada et al.~\cite{Ham4H3}, Kohr~\cite{Kohr2H3}, Liu and Liu~\cite{LiuH3,Liu2H3}, Xu and Liu~\cite{Xu3H3,Xu4H4}.

   In this paper, we find the bounds of second and third order Toeplitz determinants for a class of holomorphic mappings on the bounded starlike circular domain in $\mathbb{C}^n$, which give an extension of Theorem  A, Theorem B and Theorem C to higher dimensions.
\section{Preliminaries}
     By $\mathbb{C}^n$, we denote the space of $n$ complex variables $z= (z_1, z_2,\cdots, z_n)'$ with the Euclidean inner product $\langle z,w \rangle = \sum_{i=1}^n z_i \overline{w}_i$ and the norm $\| z \| =\langle z,w \rangle^{1/2}$. Let $\mathbb{U}^n$ be the Euclidean unit ball in $\mathbb{C}^n$ and $\Omega \subset \mathbb{C}^n$ be a bounded starlike circular domain with $0 \in \Omega$ and its Minkowski functional $\rho(z) \in C^{1}$ in $\mathbb{C}^n \setminus\{0\}.$ Let $\mathcal{H}(\Omega)$ be the set of all holomorphic mappings from $\Omega$ into $\mathbb{C}^n$.  If $g\in \mathcal{H}(\Omega)$, then
    $$ g(w) = \sum_{k=0}^\infty \frac{1}{k!} D^k g(z) ((w -z)^k )$$
    for all $w$ in some neighborhood of $z$, where $D^k g(z)$ is the $k$th Fr\'{e}chet derivative of $g$ at $z$. A function $g\in \mathcal{H}(\Omega)$ is said to be biholomorphic if $g(\Omega)$ is a domain in $\mathbb{C}^n$ and inverse of $g$ exists, which is holomorphic on $g(\Omega).$ Let $J_g(z)$ be the Jacobian matrix of $g$ and $\det J_g(z)$ be the Jacobian determinant of $g$ at $z\in \Omega$. A mapping $g \in \mathcal{H}(\Omega)$ is said to be locally biholomorphic if $\det J_g(z) \neq 0$ for all $z\in \Omega$. In higher dimensions, $g\in \mathcal{H}(\Omega)$ is said to be normalized if $g(0)=0$ and $J_g(0)=I$, where $I$ is the identity matrix. Let $\mathcal{S}^*(\Omega)$ denotes the class of starlike mappings on $\Omega$. When $n=1$, $\Omega = \mathbb{U}$, the class $\mathcal{S}^*(\mathbb{U})$ is denoted by $\mathcal{S}^*.$

    On a bounded circular domain $\Omega \subset \mathbb{C}^n$, the first and the $m^{th}$ Fr\'{e}chet derivative of a holomorphic mapping $g \in \mathcal{H}(\Omega)$  are written by
    $ D g(z)$ and $D^m g(z) (a^{m-1},\cdot)$, respectively. The matrix representations are
\begin{align*}
    D g(z) &= \bigg(\frac{\partial g_j}{\partial z_k} \bigg)_{1 \leq j, k \leq n}, \\
    D^m g(z)(a^{m-1}, \cdot) &= \bigg( \sum_{p_1,p_2, \cdots, p_{m-1}=1}^n  \frac{ \partial^m g_j (z)}{\partial z_k \partial z_{p_1} \cdots \partial z_{p_{m-1}}} a_{p_1} \cdots a_{p_{m-1}}   \bigg)_{1 \leq j,k \leq n},
\end{align*}
   where $g(z) = (g_1(z), g_2(z), \cdots g_n(z))', a= (a_1, a_2, \cdots a_n)'\in \mathbb{C}^n.$

    According to Liu and Lu~\cite{LiudH3} (see also \cite{Ham2H3}), we have the following definition.
\begin{definition}
     Let $\Omega$ be a bounded starlike circular domain in $\mathbb{C}^n$ with $0 \in \Omega$ and its Minkowski functional $\rho \in C^{1}$ in $\mathbb{C}^n \setminus\{0\}.$ A normalized locally biholomorphic mapping $g : \Omega \rightarrow \mathbb{C}^n$ is said to be starlike of order $\alpha$ $(0 \leq \alpha <1)$ if
     $$  \bigg\vert \frac{2}{\rho(z)} \frac{\partial \rho}{\partial z} J_g^{-1}(z) g(z) - \frac{1}{2\alpha}  \bigg\vert < \frac{1}{2\alpha}, \quad \forall z\in \Omega \setminus\{0\}.    $$
     Equivalently, the above equation can be written as
      $$ \RE \bigg\{ \frac{\rho(z)}{ 2 \frac{ \partial \rho(z)}{ \partial z} J_g^{-1}(z) g(z) }\bigg\} > \alpha , \quad \forall z\in \Omega \setminus\{0\}. $$
    Clearly, when $\Omega = \mathbb{U}^n$, the  aforementioned inequality is equivalent to
    $$ \RE \bigg\{ \frac{\|z \|^2}{ \langle J_g^{-1}(z) g(z),z \rangle }\bigg\} > \alpha , \quad \forall z\in \mathbb{U}^n \setminus\{0\}. $$
    In case of $n=1$, $\Omega = \mathbb{U}$ and the above relation is equivalent to
    $$ \RE \frac{z g'(z) }{g(z)} >0 , \quad z \in \mathbb{U}.$$
    We denote by $\mathcal{S}^*_{\alpha}(\Omega)$ the set of all starlike mappings of order $\alpha$ on $\Omega.$
\end{definition}
\begin{definition}\cite{GraKohH3}(also see~\cite{KohrH3,Ham3H3})
     Let $\Omega$ be a bounded starlike circular domain in $\mathbb{C}^n$ with $0 \in \Omega$ and its Minkowski functional $\rho \in C^{1}$ in $\mathbb{C}^n \setminus\{0\}.$ A normalized locally biholomorphic mapping $g : \Omega \rightarrow \mathbb{C}^n$ is said to be strongly starlike of order $\beta$ $(0 < \beta \leq 1)$ if
     $$  \bigg\vert \arg \frac{2}{\rho(z)} \frac{\partial \rho}{\partial z} J_g^{-1}(z) g(z)  \bigg\vert < \frac{\pi}{2} \beta, \quad \forall z\in \Omega \setminus\{0\}.    $$
    Clearly, when $\Omega = \mathbb{U}^n$, the  aforementioned inequality is equivalent to
    $$ \vert \arg \langle J_g^{-1}(z) g(z),z \rangle \vert  < \frac{\pi}{2}\beta , \quad \forall z\in \mathbb{U}^n \setminus\{0\}. $$
    In case of $n=1$, $\Omega = \mathbb{U}$ and the above relation is equivalent to
    $$ \bigg\vert \arg \frac{z g'(z) }{g(z)} \bigg\vert < \frac{\pi}{2}\beta , \quad \forall z \in \mathbb{U}.$$
    We denote by $\mathcal{SS}^*_{\beta}(\Omega)$ the set of all strongly starlike mappings of order $\beta$ on $\Omega.$
\end{definition}
   Next, we recall the class $\mathscr{M}$, which plays a fundamental role in the study of Loewner chains and Loewner differential equation in several complex variables (see~\cite{GraKohH3,Pfa2H3}.
\begin{equation*}
   \mathscr{M}= \left\{ p \in \mathcal{H}(\Omega) : p(0) =0, J_p(0)=I, \RE  \frac{\partial\rho}{\partial z} p(z)  >0, z\in \Omega\setminus \{ 0 \} \right\},
\end{equation*}
    where
    $\partial \rho(z)/\partial z = (\partial \rho(z)/\partial z_1, \partial \rho(z)/\partial z_2, \cdots,\partial \rho(z)/\partial z_n).$

    Kohr~\cite{Kohr2H3} introduced the class $\mathscr{M}_\Phi$ on $\mathbb{U}^n$, which is studied by Graham et al.~\cite{GrahH3} (see also~\cite{Grah2H3}), where $\Phi : \mathbb{U} \rightarrow \mathbb{C}$ is a biholomorphic function such that $\Phi(0)=1$ and $\RE \Phi(z)>0$ on $\mathbb{U}.$ Recently, Xu et al.~\cite{XuH3} considered the class $\mathscr{M}_\Phi$ on $\Omega \subset \mathbb{C}^n$.  Here, we add some more conditions on $\Phi$ and define the following subsets of $\mathscr{M}.$
\begin{assumption}
    Let $\Phi : \mathbb{U}\rightarrow \mathbb{C}$ be a biholomorphic function such that $\Phi(0)=1$, $\Phi'(0)>0$, $\Phi''(0)\in \mathbb{R}$ and $\RE \Phi(z) >0$ on $\mathbb{U}.$
\end{assumption}
\noindent        Obviously, there are many functions which satisfy this assumption. Let
\begin{equation*}
   \mathscr{M}_\Phi = \left\{ p \in \mathcal{H}(\Omega) : p(0) =0, J_p(0)=I, \frac{\rho(z)}{ 2 \frac{\partial\rho}{\partial z} p(z) } \in \Phi(\mathbb{U}), z\in \Omega\setminus \{ 0 \} \right\}.
\end{equation*}
     The class $\mathscr{M}_\Phi$ coincides with $\mathscr{M}$ for $\Phi(z) = (1+z)/(1-z)$, $z\in \mathbb{U}$.  Also, if $\Omega =\mathbb{U}^n$, then
\begin{equation*}
   \mathscr{M}_\Phi = \left\{ p \in \mathcal{H}(\mathbb{U}^n) : p(0) =0, J_p(0)=I, \frac{\| z \|^2}{ \langle p(z),z \rangle } \in \Phi(\mathbb{U}), z\in \mathbb{U}^n\setminus \{ 0 \} \right\}.
\end{equation*}
\begin{remark}\label{rem1}
     Let $g\in \mathcal{H}(\mathbb{U})$ be a normalized locally biholomorphic function.  If $J_g^{-1}(z) g(z) \in \mathcal{M}_\Phi$, then for different choices of $\Phi$, we obtain different important classes of $\mathcal{S}(\Omega)$. For instance, if we take $\Phi(z) = (1+z)/(1-z)$, $\Phi(z) =  (1+(1 - 2\alpha )z)/(1-z)$ and  $\Phi(z) = ( (1+z)/(1-z))^{\beta}$ (where the branch point is chosen such that $( (1+z)/(1-z))^{\beta} =1$ at $z=0$), then we  easily obtain $g \in \mathcal{S}^*(\Omega)$, $g \in \mathcal{S}^*_\alpha(\Omega)$ and $g \in \mathcal{SS}^*_\beta(\Omega)$, respectively.
\end{remark}
   The following lemma helps us to prove the main results.
\begin{lemma}\cite{LiugH3}
     $\Omega \subset \mathbb{C}^n$ is a bounded starlike circular domain if and only if there exists a unique real continuous functions $\rho: \mathbb{C}^n \rightarrow \mathbb{R}$, called the Minkowski functional of $\Omega$, such that
\begin{enumerate}[(i)]
  \item $\rho(z) \geq 0$, $z \in \mathbb{C}^n$; $\rho(z) = 0  \Leftrightarrow z=0$;
  \item $\rho(t z) = \vert t \vert \rho(z)$, $t\in \mathbb{C}$, $z \in \mathbb{C}^n$;
  \item $\Omega = \{ z \in \mathbb{C}^n : \rho(z)< 1\}.$
\end{enumerate}
  Furthermore, if $\rho(z) \in C^{1}$ in $\mathbb{C}^n\setminus \{0\}$, then the function $\rho(z)$ has the following properties.
\begin{equation}\label{eqnmf}
      2 \frac{\partial \rho(z)}{\partial z} z = \rho(z), \quad z \in \mathbb{C}^n,
\end{equation}
   $$ 2 \frac{\partial \rho(z_0)}{\partial z} z_0 = 1, \quad z_0 \in \partial \Omega, $$
   $$ \frac{\partial \rho (\lambda z)}{\partial z} =  \frac{\partial \rho (z)}{\partial z} , \quad \lambda \in (0, \infty),     $$
   $$ \frac{\partial \rho (e^{i\theta} z)}{\partial z} = e^{-i \theta} \frac{\partial \rho (z)}{\partial z} , \quad \theta \in\mathbb{R}.    $$
\end{lemma}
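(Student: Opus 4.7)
The plan is to prove the equivalence first (forward direction: construct $\rho$ from $\Omega$; reverse direction: recover $\Omega$ from $\rho$), and then to derive the four analytic identities by differentiating the two homogeneity relations built into $(ii)$.

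For the forward implication, assuming $\Omega$ is a bounded starlike circular domain with $0\in\Omega$, I define the candidate Minkowski functional
$$\rho(z) := \inf\{t>0 : z/t \in \Omega\}, \quad z \in \mathbb{C}^n\setminus\{0\}, \qquad \rho(0):=0.$$
Boundedness of $\Omega$ forces $\rho(z)>0$ for $z\neq 0$, giving $(i)$; starlikeness makes $\{t>0 : z/t\in\Omega\}$ a half-ray, so openness of $\Omega$ yields $(iii)$; positive homogeneity $\rho(\lambda z)=\lambda\rho(z)$ is immediate from the definition, and combining it with circular invariance $\rho(e^{i\theta}z)=\rho(z)$ (inherited from $e^{i\theta}\Omega=\Omega$) gives $(ii)$. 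Uniqueness is forced because $(ii)$ together with $(iii)$ determines $\rho$ on every complex line through $0$. The reverse direction is straightforward: given $\rho$ satisfying $(i)$--$(iii)$, set $\Omega=\{\rho<1\}$; then $(ii)$ with real $t\in[0,1]$ gives starlikeness, $(ii)$ with $t=e^{i\theta}$ gives circular invariance, and continuity together with positivity of $\rho$ on the unit sphere produces a two-sided bound $c\|z\|\le\rho(z)\le C\|z\|$ that simultaneously supplies openness, boundedness, and $0\in\Omega$.

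For the four differential identities under the extra assumption $\rho\in C^1(\mathbb{C}^n\setminus\{0\})$, the key observation is that $\rho$ is real-valued, so $\partial\rho/\partial\bar{z}_j = \overline{\partial\rho/\partial z_j}$. Differentiating $\rho(\lambda z)=\lambda\rho(z)$ in the real parameter $\lambda$ at $\lambda=1$ gives
$$\sum_{j=1}^{n}\frac{\partial\rho}{\partial z_j}(z)\,z_j+\sum_{j=1}^{n}\frac{\partial\rho}{\partial\bar{z}_j}(z)\,\bar{z}_j=\rho(z),$$
that is, $2\,\RE\bigl(\frac{\partial\rho}{\partial z}z\bigr)=\rho(z)$. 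Differentiating $\rho(e^{i\theta}z)=\rho(z)$ in $\theta$ at $\theta=0$ yields the complementary ``imaginary part'' identity $\frac{\partial\rho}{\partial z}z=\overline{\frac{\partial\rho}{\partial z}z}$, so this quantity is real; combining the two gives \eqref{eqnmf}. The normalisation on $\partial\Omega$ is simply \eqref{eqnmf} evaluated where $\rho(z_0)=1$. The last two relations are obtained by differentiating $\rho(\lambda z)=\lambda\rho(z)$ and $\rho(e^{i\theta}z)=\rho(z)$ in the spatial variable $z_k$ instead of the parameter, applying the chain rule, and cancelling the scalar factor ($\lambda$ or $e^{i\theta}$) that appears on both sides.

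The main obstacle I anticipate is the forward direction --- specifically, verifying continuity of the Minkowski functional from only the set-theoretic data ``bounded starlike circular'', and checking that the regularity hypothesis $\rho\in C^1$ is meaningful (the $C^1$ hypothesis is an additional assumption on $\Omega$ and not automatic). Once continuity and the two homogeneity relations are secured, all of the analytic identities collapse to routine applications of the chain rule and of the real-valuedness constraint on Wirtinger derivatives.
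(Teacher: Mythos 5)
The paper itself does not prove this lemma: it is imported verbatim from Liu and Ren \cite{LiugH3}, so there is no internal argument to compare yours against; I can only judge your sketch on its own terms. Most of it is the standard route and is fine: the reverse implication, the uniqueness argument (for any $c>0$, property (ii) gives $\rho(z)<c \Leftrightarrow \rho(z/c)<1 \Leftrightarrow z/c\in\Omega$ by (iii), so two admissible functions have the same strict sublevel sets), and the four differential identities, where your use of $\partial\rho/\partial\bar z_j=\overline{\partial\rho/\partial z_j}$, the $\lambda$-derivative giving $2\RE\bigl(\frac{\partial\rho}{\partial z}z\bigr)=\rho(z)$, the $\theta$-derivative giving $\IM\bigl(\frac{\partial\rho}{\partial z}z\bigr)=0$, and the holomorphic chain rule for the two gradient relations are all correct. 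Two small touch-ups: the boundary normalisation needs the one-line remark that continuity together with (ii)--(iii) forces $\rho\equiv1$ on $\partial\Omega$ before you may ``evaluate \eqref{eqnmf} where $\rho(z_0)=1$''; and in the $e^{i\theta}$ case the factor $e^{i\theta}$ appears only on the left-hand side, so it is moved across rather than ``cancelled on both sides''.

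The genuine gap is exactly the point you flag and then leave open: continuity of the gauge in the forward direction. This is not a routine verification that can be deferred. Upper semicontinuity is automatic (starlikeness plus openness of $\Omega$ make every sublevel set $\{\rho<c\}$ open), but lower semicontinuity does \emph{not} follow from the bare hypotheses ``open, connected, bounded, star-shaped about $0$, circular''. Describing $\Omega\setminus\{0\}$ by its radial function $R$ on the unit sphere, these hypotheses only force $R$ to be positive, circle-invariant and lower semicontinuous; any such $R$ that fails to be continuous produces a bounded starlike circular domain whose gauge is discontinuous. For instance, in $\mathbb{C}^2$ take $R\equiv 2$ off the circle $\{(e^{i\theta},0):\theta\in\mathbb{R}\}$ and $R\equiv 1$ on it: the resulting domain is open, bounded, star-shaped and circular, yet $\rho$ jumps along the ray through $(1,0)$. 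So the existence half of the forward implication genuinely depends on the stronger form of starlikeness used in \cite{LiugH3} (for example $t\,\overline{\Omega}\subset\Omega$ for all $0<t<1$, from which lower semicontinuity follows by a short closure argument: $\rho(z_k)\le c$ and $z_k\to z$ give $z/(c+\varepsilon)\in\overline{\Omega}$, hence $tz/(c+\varepsilon)\in\Omega$ for every $t<1$, hence $\rho(z)\le c$). Until you either invoke that definition and run this argument, or cite the continuity statement explicitly, the forward direction of the lemma is not established by your sketch.
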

\section{Main Results}
     The sharp bounds of second and third order Toeplitz determinants for a class of holomorphic mappings on $\Omega$ are derived in this section. Later, applications of these results for other interesting subclasses of $\mathcal{S}(\Omega)$ are given.
\begin{theorem}\label{thm1H3}
    Let $g\in \mathcal{H}(\Omega,\mathbb{C})$ with $g(0)=1$ and $G(z) = z g(z)$. If $J_G^{-1}(z) G(z) \in \mathscr{M}_\Phi$ such that $\Phi$ satisfies
    $$ \vert \Phi''(0) + 2 (\Phi'(0))^2 \vert  \geq 2  \Phi'(0) > 0, $$
    then
\begin{align*}
    \bigg\vert    \bigg( 2 \frac{\partial \rho}{\partial z}  \frac{ D^3 G(0) (z^3)}{3!  \rho^3 (z)} \bigg)^2 - & \bigg(2 \frac{\partial \rho}{\partial z} \frac{D^2 G(0) (z^2)}{2! \rho^2 (z)} \bigg)^2  \bigg\vert
   \leq ( \Phi'(0) )^2 +  \frac{( \Phi'(0))^2}{4}   \left( \frac{1}{2} \frac{\Phi''(0)}{\Phi'(0)} + \Phi'(0) \right)^2.
\end{align*}
    The bound is sharp.
\end{theorem}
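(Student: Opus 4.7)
The strategy is to reduce this multivariable inequality to a one-variable Toeplitz–determinant bound on the unit disk by restricting $g$ to complex lines through $0$, and then to apply the Schur–Keogh–Merkes estimate in the regime $|\mu|\ge 1$ guaranteed by the hypothesis on $\Phi$.

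Begin by writing the homogeneous expansion $g(z)=1+P_1(z)+P_2(z)+\cdots$, where each $P_k$ is a scalar homogeneous polynomial of degree $k$; then $\frac{1}{2!}D^2G(0)(z^2)=zP_1(z)$ and $\frac{1}{3!}D^3G(0)(z^3)=zP_2(z)$. The Minkowski-functional identity $2(\partial\rho/\partial z)z=\rho(z)$ from \eqref{eqnmf} collapses the expression to be estimated to $|P_2(z)^2/\rho(z)^4-P_1(z)^2/\rho(z)^2|$, and by homogeneity this equals $|P_2(z_0)^2-P_1(z_0)^2|$ at the normalized point $z_0:=z/\rho(z)\in\partial\Omega$. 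Applying the Sherman–Morrison formula to $J_G(z)=g(z)I+z(\nabla g(z))^T$ gives
\begin{equation*}
J_G^{-1}(z)G(z)=\frac{g(z)}{g(z)+h(z)}\,z,\qquad h(z):=\sum_{i=1}^{n}z_i\,\partial g/\partial z_i,
\end{equation*}
so the defining ratio for $\mathscr{M}_\Phi$ simplifies to $(g(z)+h(z))/g(z)$. Fix $z_0\in\partial\Omega$ and set $f(\zeta):=\zeta g(\zeta z_0)$ for $\zeta\in\mathbb{U}$; Euler's identity yields $f'(\zeta)=g(\zeta z_0)+h(\zeta z_0)$, whence $\zeta f'(\zeta)/f(\zeta)\in\Phi(\mathbb{U})$. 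Thus $f$ is a one-variable function starlike-subordinate to $\Phi$ whose Taylor coefficients are exactly $b_2=P_1(z_0)$ and $b_3=P_2(z_0)$, and the original problem reduces to bounding $|b_2^2-b_3^2|$.

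Next, write $\zeta f'(\zeta)/f(\zeta)=\Phi(w(\zeta))$ for a Schwarz function $w(\zeta)=c_1\zeta+c_2\zeta^2+\cdots$ and match coefficients to obtain
\begin{equation*}
b_2=\Phi'(0)c_1,\qquad b_3=\tfrac{\Phi'(0)}{2}\bigl(c_2+\mu c_1^2\bigr),\qquad \mu:=\tfrac{\Phi''(0)+2\Phi'(0)^2}{2\Phi'(0)}.
\end{equation*}
The hypothesis $|\Phi''(0)+2\Phi'(0)^2|\ge 2\Phi'(0)$ is precisely $|\mu|\ge 1$, so the Schur–Keogh–Merkes inequality $|c_2+\mu c_1^2|\le\max(1,|\mu|)=|\mu|$ gives $|b_3|\le|\Phi''(0)+2\Phi'(0)^2|/4$. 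Combined with $|b_2|\le\Phi'(0)$ and the triangle inequality $|b_2^2-b_3^2|\le|b_2|^2+|b_3|^2$, this yields the stated bound $\Phi'(0)^2+(\Phi''(0)+2\Phi'(0)^2)^2/16$, which is the right-hand side of the theorem after rewriting. For sharpness, take $w(\zeta)=i\zeta$, so that $b_2^2=-\Phi'(0)^2$ and $-b_3^2=-(\Phi''(0)+2\Phi'(0)^2)^2/16$ are both negative real; the corresponding one-variable $f$ lifts to $\Omega$ as $g_0(z):=f(\ell(z))/\ell(z)$, where $\ell$ is a linear functional with $\ell(z_0)=1$ and $\ell(\Omega)\subset\mathbb{U}$, realising equality at $z=z_0$.

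The main obstacle is the opening algebraic reduction: identifying $f(\zeta)=\zeta g(\zeta z_0)$ as the correct one-variable proxy and carrying out the Sherman–Morrison computation of $J_G^{-1}G$, after which everything is driven by a one-dimensional subordination. The hypothesis on $\Phi$ is calibrated precisely so that Schur–Keogh–Merkes returns the value $|\mu|$ rather than $1$, and so that the crude triangle inequality $|X-Y|\le|X|+|Y|$ is simultaneously saturated by the extremal Schwarz function $w(\zeta)=i\zeta$.
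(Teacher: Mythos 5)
Your argument is correct and its skeleton coincides with the paper's: both proofs restrict to the slice $\zeta\mapsto \zeta z_0$ with $z_0=z/\rho(z)$, use the rank-one (Sherman--Morrison/Pfaltzgraff--Suffridge) inversion of $J_G$ to see that the defining ratio for $\mathscr{M}_\Phi$ equals $1+J_g(\zeta z_0)\zeta z_0/g(\zeta z_0)=\zeta f'(\zeta)/f(\zeta)$, identify the two normalized Fr\'echet quantities with the slice coefficients $P_1(z_0)$ and $P_2(z_0)$, bound each separately, finish with $|X^2-Y^2|\le |X|^2+|Y|^2$, and exhibit the same extremal (your $w(\zeta)=i\zeta$ lifted along a linear functional is exactly the paper's $G(z)=z\exp\int_0^{z_1/r}(\Phi(it)-1)t^{-1}\,dt$ with $\ell(z)=z_1/r$). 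The one genuine difference is the source of the bound on the third coefficient: the paper quotes the several-variable Fekete--Szeg\H{o} inequality of Xu, Liu and Zhang (its display (3.7)) with $\lambda=0$, whereas you re-derive the needed case in one variable by matching coefficients against a Schwarz function and applying the Keogh--Merkes estimate $|c_2+\mu c_1^2|\le\max\{1,|\mu|\}$ with $\mu=(\Phi''(0)+2\Phi'(0)^2)/(2\Phi'(0))$; your route is more self-contained (modulo that classical lemma), while the paper's is shorter given the citation, and the two give identical bounds since the hypothesis is exactly $|\mu|\ge 1$. Two small points you should tighten in the sharpness step: (i) the existence of a linear functional $\ell$ with $\ell(z_0)=1$ and $\ell(\Omega)\subset\mathbb{U}$ is not automatic for an arbitrary $z_0\in\partial\Omega$, because $\Omega$ is only starlike circular, not convex; sharpness only requires one admissible pair, so you should fix $\ell(z)=z_1/r$ with $r=\sup\{|z_1|:z\in\Omega\}$ and $z_0\in\partial\Omega$ with $z_{0,1}=r$, which is precisely the paper's choice; and (ii) you should record the one-line verification that the lifted map $G_0(z)=z\,f(\ell(z))/\ell(z)$ indeed satisfies $J_{G_0}^{-1}(z)G_0(z)\in\mathscr{M}_\Phi$ (the same slice computation run backwards, using $\mathrm{Re}\,\Phi>0$ to keep $\det J_{G_0}\neq 0$), and that the equality case uses $\Phi'(0)>0$, $\Phi''(0)\in\mathbb{R}$ from the standing Assumption so that $b_2^2$ and $-b_3^2$ are both negative reals. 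With these details supplied, your proof is complete and matches the paper's bound and extremal exactly.
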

\begin{proof}
    Since $J^{-1}_G(z)$ exists, therefore $g(z) \neq 0$, $z \in \Omega$. For fix $z \in \Omega \setminus \{ 0\}$, let us denote $z_0 = \frac{z}{\rho(z)}$ and define $h: \mathbb{U} \rightarrow \mathbb{C}$ such that
\begin{equation*}
    h(\zeta) = \left\{ \begin{array}{ll}
     \frac{\zeta}{ 2 \frac{\partial \rho (z_0)}{\partial z} J^{-1}_G(\zeta z_0) G( \zeta z_0)}, & \;\;\zeta \neq 0, \\
    1, & \;\; \zeta =0.
    \end{array}
    \right.
\end{equation*}
    Using the property  $2 \frac{\partial \rho(z_0)}{\partial z}z_0 =1$ for $z_0 \in \partial \Omega$ of Minkowski functional, we obtain $h \in \mathcal{H}(\mathbb{U})$ and since $J^{-1}_G( z) G( z) \in \mathcal{M}_\Phi$, therefore
\begin{align*}
     h(\zeta) &=     \frac{\zeta}{ 2 \frac{\partial \rho (z_0)}{\partial z} J^{-1}_G(\zeta z_0) G( \zeta z_0)} \\
             &= \frac{\rho( \zeta  z_0)}{ 2 \frac{\partial \rho (\zeta  z_0)}{\partial z} J^{-1}_G(\zeta z_0) G( \zeta z_0)} \in \Phi (\mathbb{U}), \quad \zeta \in \mathbb{U}\setminus\{ 0 \}.
\end{align*}
    Applying the same technique as in \cite{PfaH3} (also see \cite[Theorem 7.1.14]{GraKohH3}), we obtain
\begin{equation*}
     J^{-1}_G (z) =   \frac{1}{g(z)} \bigg( I - \frac{ \frac{z J_g(z)}{g(z)}}{1 + \frac{J_g(z) z}{g(z)}} \bigg).
\end{equation*}
     Now, using $G(z) = z g(z)$, we have
     $$   J^{-1}_G (z) G(z) =   \frac{z g(z)}{ g(z) + J_g (z) z}, \quad z \in \Omega \setminus\{0\},  $$
   which together with (\ref{eqnmf}) gives
   $$  \frac{\rho(z)}{2 \frac{\partial \rho(z)}{\partial z} J^{-1}_G (z) G(z) }  =  1 + \frac{J_g (z) z}{g(z)} , \quad  z \in \Omega \setminus \{0 \}. $$
   In view of the above equation, we obtain
   $$  h(\zeta) = \frac{\rho(\zeta z_0)}{2 \frac{\partial \rho(\zeta z_0)}{\partial z} J^{-1}_G (\zeta z_0) G(\zeta z_0) }  =  1 + \frac{J_g (\zeta z_0) \zeta z_0}{g(\zeta z_0)},   $$
   which immediately yields
   $$  h(\zeta) g(\zeta z_0) = g(\zeta z_0) +  J_g (\zeta z_0) \zeta z_0 . $$
   Based on the Taylor series expansions in $\zeta$, the above equation gives
\begin{align*}
   \bigg( 1 &+ h'(0) \zeta + \frac{h''(0)}{2!} \zeta^2 + \cdots \bigg) \bigg( 1 + J_g(0) (z_0) \zeta + \frac{D^2 g(0) (z_0^2)}{2!} \zeta^2 + \cdots \bigg)   \\
  &= \bigg( 1 + J_g (0) (z_0) \zeta + \frac{D^2 g(0) (z_0^2)}{2!} \zeta^2 + \cdots  \bigg) \bigg( J_g(0) (z_0) \zeta + D^2 g(0) (z_0^2) \zeta^2 + \cdots \bigg).
\end{align*}
  By the comparison of homogeneous expansions, we get
   $$ h'(0) = J_g(0) (z_0) . $$
   Further, using $z_0 = \frac{z}{\rho(z)}$ in the above relation, we have
\begin{equation}\label{eq1H3}
     h'(0) \rho(z) = J_g(0) (z).
\end{equation}
   Since, we also have $G(z) = z g(z)$, therefore
   $$ \frac{D^2 G(0) z^2}{2!} = J_g(0)(z) z, $$
  which together with (\ref{eqnmf}) leads to
\begin{equation}\label{eqnn}
     2 \frac{\partial \rho}{\partial z} \frac{D^2 G(0) z^2}{2!} = J_g(0)(z) \rho(z).
\end{equation}
    Thus, from (\ref{eq1H3}) and (\ref{eqnn}), we obtain
\begin{equation*}
     2 \frac{\partial \rho}{\partial z} \frac{D^2 G(0) z^2}{2! \rho^2(z)} = h'(0) .
\end{equation*}
    Since $h \prec \Phi$, therefore $\vert h'(0) \vert \leq \vert \Phi'(0) \vert$, using this fact, we get
\begin{equation}\label{a2H3}
   \bigg\vert  2 \frac{\partial \rho}{\partial z} \frac{D^2 G(0) z^2}{2! \rho^2(z)} \bigg\vert \leq \vert \Phi'(0) \vert .
\end{equation}
   For $\lambda \in \mathbb{C}$, Xu et al. \cite[Theorem 1]{XuH3} proved that
\begin{equation}\label{FSBH3}
\begin{aligned}
\left.
\begin{array}{ll}
   &\bigg\vert   2 \dfrac{\partial \rho}{\partial z} \dfrac{ D^3 G(0) (z^3)}{3!  \rho^3 (z)} -  \lambda \bigg(2 \dfrac{\partial \rho}{\partial z} \dfrac{D^2 G(0) (z^2)}{2! \rho^2 (z)} \bigg)^2 \bigg\vert \\
   &\quad \quad \quad \quad \quad  \leq \dfrac{\vert \Phi'(0) \vert}{2} \max \left\{ 1,  \left\lvert \dfrac{1}{2} \dfrac{\Phi''(0)}{\Phi'(0)} +  (1 -  2 \lambda )  \Phi'(0) \right\rvert \right\} , \;\;  z \in \Omega \setminus \{0 \}.
\end{array}
\right\}
\end{aligned}
\end{equation}
    Thus, when $  \vert \Phi''(0) + 2 (\Phi'(0))^2 \vert  \geq 2  \Phi'(0) $, the equation (\ref{FSBH3}) readily yields
\begin{equation}\label{a3H3}
   \bigg\vert   2 \frac{\partial \rho}{\partial z} \frac{ D^3 G(0) (z^3)}{3!  \rho^3 (z)} \bigg\vert \leq  \frac{ \Phi'(0)}{2}   \left\lvert \frac{1}{2} \frac{\Phi''(0)}{\Phi'(0)} + \Phi'(0) \right\rvert.
\end{equation}
   Using the bounds given in (\ref{a2H3}) and (\ref{a3H3}), together with the following inequality
\begin{align*}
   \bigg\vert    \bigg( 2 \frac{\partial \rho}{\partial z}  \frac{ D^3 G(0) (z^3)}{3!  \rho^3 (z)} \bigg)^2 - & \bigg(2 \frac{\partial \rho}{\partial z} \frac{D^2 G(0) (z^2)}{2! \rho^2 (z)} \bigg)^2  \bigg\vert &  \\
   &\quad \quad  \quad \quad \leq    \bigg\vert 2 \frac{\partial \rho}{\partial z}  \frac{ D^3 G(0) (z^3)}{3!  \rho^3 (z)} \bigg\vert^2 +  \bigg\vert 2 \frac{\partial \rho}{\partial z} \frac{D^2 G(0) (z^2)}{2! \rho^2 (z)} \bigg\vert^2,
\end{align*}
    we find the required bound.

   To see the sharpness of the bound consider the function
\begin{equation}\label{extHB}
    G(z) = z \exp \int_0^{\frac{z_1}{r}} \frac{( \Phi(i t)-1) }{t}dt, \quad z\in \Omega,
\end{equation}
   where $ r = \sup \{ \vert z_1 \vert : z = (z_1, z_2, \cdots, z_n)' \in \Omega    \}.$ It can be easily showed that $J^{-1}_G (z) G(z) \in \mathcal{M}_\Phi$ and
    $$  \frac{D^2  G(0) (z^2)}{2!}= i \Phi'(0) (\frac{z_1}{r}) z \;\; \text{and} \;\; \frac{D^3  G(0) (z^3)}{3!} = - \frac{1}{2} \left( \frac{\Phi''(0)}{2} + (\Phi'(0))^2 \right) (\frac{z_1}{r})^2 z. $$
    By applying (\ref{eqnmf}) in the above relations, we get
\begin{align*}
    2 \frac{\partial \rho}{\partial z}   \frac{D^2  G(0) (z^2) }{2!} &= i \Phi'(0) (\frac{z_1}{r}) \rho(z)
\end{align*}
    and
\begin{align*}
      2 \frac{\partial \rho}{\partial z} \frac{D^3  G(0) (z^3)}{3!} \rho(z) &= - \frac{1}{2} \left( \frac{\Phi''(0)}{2} + (\Phi'(0))^2 \right) (\frac{z_1}{r})^2 \rho^2(z).
\end{align*}
    Setting $z = R u$ $(0< R <1)$, where $u =(u_1, u_2, \cdots, u_n)' \in \partial \Omega$ and $u_1 =r$, we obtain
\begin{align}\label{b2H3}
    2 \frac{\partial \rho}{\partial z}   \frac{D^2  G(0) (z^2) }{2! \rho^2(z)} &= i \Phi'(0)
\end{align}
   and
\begin{align}\label{b3H3}
      2 \frac{\partial \rho}{\partial z} \frac{D^3  G(0) (z^3)}{3! \rho^3(z)}  &= - \frac{1}{2} \left( \frac{\Phi''(0)}{2} + (\Phi'(0))^2 \right).
\end{align}
   Thus, from (\ref{b2H3}) and (\ref{b3H3}), we have
\begin{align*}
     & \bigg\vert   \bigg( 2 \frac{\partial \rho}{\partial z}  \frac{ D^3 G(0) (z^3)}{3!  \rho^3 (z)} \bigg)^2 -  \bigg(2 \frac{\partial \rho}{\partial z} \frac{D^2 G(0) (z^2)}{2! \rho^2 (z)} \bigg)^2  \bigg\vert    \\
    & \quad \quad \quad \quad \quad \quad \quad \quad \quad\quad \quad \quad \quad \quad \quad\quad \leq   \frac{( \Phi'(0))^2}{4} \left( \frac{1}{2} \frac{\Phi''(0)}{\Phi'(0)} + \Phi'(0) \right)^2 + ( \Phi'(0) )^2 ,
\end{align*}
    which shows the sharpness of the bound and  completes the proof.
\end{proof}
\begin{theorem}\label{thm2H3}
    Let $g\in \mathcal{H}(\Omega,\mathbb{C})$ with $g(0)=1$ and $G(z) = z g(z)$. If $J^{-1}_G (z) G(z) \in \mathscr{M}_\Phi$ such that $\Phi$ satisfies
    $$ 2 \Phi'(0) - 2 (\Phi'(0))^2 \leq {\Phi''(0)} \leq 6 (\Phi'(0))^2 - 2 \Phi'(0) ,$$
    then
\begin{align*}
     \vert 2 b_2^2 b_3 & - b_3^2 - 2 b_2^2 + 1 \vert   \\
     &  \quad \quad \quad  \quad \quad \leq 1 + 2 ( \Phi'(0) )^2 + \frac{ ( \Phi'(0) )^2}{4} \bigg( 3 \Phi'(0)  - \frac{\Phi''(0)}{2 \Phi'(0)} \bigg)  \bigg( \frac{\Phi''(0)}{2 \Phi'(0)} + \Phi'(0) \bigg),
\end{align*}
   where
\begin{align}\label{b2b3}
       b_3 = 2 \frac{\partial \rho}{\partial z} \frac{ 2 D^3 G(0) (z^3)}{3! \rho^3(z) }   \;\; \text{and} \;\; b_2 =  2 \frac{\partial \rho}{\partial z} \frac{ 2 D^2 G(0) (z^2)}{2! \rho^2(z) }.
\end{align}
   The bound is sharp.
\end{theorem}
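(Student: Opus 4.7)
The plan is to reduce the estimate to two applications of the Fekete--Szeg\"o type inequality (\ref{FSBH3}) combined with an algebraic factoring of the Toeplitz expression. I would first observe the identity
$$2 b_2^2 b_3 - b_3^2 - 2 b_2^2 + 1 = (1 - b_3)(1 + b_3 - 2 b_2^2),$$
which is verified by direct expansion, and then apply the triangle inequality to obtain
$$|2 b_2^2 b_3 - b_3^2 - 2 b_2^2 + 1| \leq (1 + |b_3|)\bigl(1 + |b_3 - 2 b_2^2|\bigr).$$
This factorization is the key idea: it produces precisely the two quantities that the inequality (\ref{FSBH3}) of Xu et al.\ is designed to control.

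Next, I would invoke (\ref{FSBH3}) twice, once with $\lambda = 0$ (giving a bound on $|b_3|$) and once with $\lambda = 2$ (giving a bound on $|b_3 - 2 b_2^2|$); the inner quantities inside the respective maxima become $\Phi''(0)/(2\Phi'(0)) + \Phi'(0)$ and $\Phi''(0)/(2\Phi'(0)) - 3\Phi'(0)$. The hypotheses $2\Phi'(0) - 2(\Phi'(0))^2 \leq \Phi''(0) \leq 6(\Phi'(0))^2 - 2\Phi'(0)$ are chosen precisely so that the second branch of each maximum is active: the lower hypothesis forces $\Phi''(0)/(2\Phi'(0)) + \Phi'(0) \geq 1$, while the upper hypothesis forces $3\Phi'(0) - \Phi''(0)/(2\Phi'(0)) \geq 1$. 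Writing $A = \Phi'(0)$ and $B = \Phi''(0)/(2\Phi'(0))$ for brevity, this yields
$$|b_3| \leq \frac{A(B + A)}{2} \quad\text{and}\quad |b_3 - 2 b_2^2| \leq \frac{A(3A - B)}{2}.$$

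Substituting these into the product from the first step and expanding, the terms linear in $A$ collapse because $(B + A) + (3A - B) = 4A$, giving
$$\Bigl(1 + \frac{A(B+A)}{2}\Bigr)\Bigl(1 + \frac{A(3A - B)}{2}\Bigr) = 1 + 2A^2 + \frac{A^2 (B + A)(3A - B)}{4},$$
which is exactly the claimed bound upon restoring $A$ and $B$. This cancellation is the one genuinely non-trivial algebraic check; the rest of the argument is bookkeeping, and the main obstacle is simply to verify that the two hypotheses on $\Phi''(0)$ align correctly with the two $\lambda$-choices so that the second branch of each maximum is simultaneously selected.

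For sharpness I would reuse the extremal mapping $G(z) = z \exp \int_0^{z_1/r}(\Phi(it) - 1)/t \, dt$ from (\ref{extHB}). The formulas (\ref{b2H3}) and (\ref{b3H3}) give $b_2 = i \Phi'(0)$ and $b_3 = -A(B + A)/2$ upon choosing $z = Ru$ as in the proof of Theorem \ref{thm1H3}. Consequently $1 - b_3 = 1 + A(B + A)/2$ and $1 + b_3 - 2 b_2^2 = 1 + A(3A - B)/2$ are simultaneously positive real numbers, so both triangle inequalities used above are attained with equality and the product realises the right-hand side of the theorem, establishing sharpness.
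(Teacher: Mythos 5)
Your proposal is correct, and it runs on the same engine as the paper's proof: both arguments feed the Fekete--Szeg\"o type inequality (\ref{FSBH3}) with $\lambda=0$ and $\lambda=2$, use the two hypotheses on $\Phi''(0)$ exactly as you do to select the second branch of each maximum (giving $\vert b_3\vert\le \tfrac{A(B+A)}{2}$ and $\vert b_3-2b_2^2\vert\le\tfrac{A(3A-B)}{2}$ with $A=\Phi'(0)$, $B=\Phi''(0)/(2\Phi'(0))$), and use the same extremal mapping (\ref{extHB}) evaluated at $z=Ru$ for sharpness. The one genuine difference is the algebraic split: the paper writes $2b_2^2b_3-b_3^2-2b_2^2+1=1-2b_2^2-b_3(b_3-2b_2^2)$ and therefore needs three ingredients, including the separate estimate $\vert b_2\vert\le\Phi'(0)$ from (\ref{a2H3}), whereas your factorization $(1-b_3)(1+b_3-2b_2^2)$ needs only the two $\lambda$-bounds, with the cross terms collapsing via $(B+A)+(3A-B)=4A$ to reproduce the paper's $2(\Phi'(0))^2$ term. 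Your route is marginally leaner and makes sharpness transparent, since at the extremal point both factors $1+\tfrac{A(B+A)}{2}$ and $1+\tfrac{A(3A-B)}{2}$ are positive reals, so every inequality in your chain is an equality; the paper's route, by contrast, generalizes more readily when one only has an upper bound on $\vert b_2\vert$ rather than a factorable expression. One small notational point: like the paper's own proof, you read $b_2$ and $b_3$ without the extra factor $2$ that appears in the displayed definition (\ref{b2b3}) (evidently a typo there), which is the interpretation under which both the stated bound and the extremal computation $b_2=i\Phi'(0)$, $b_3=-\tfrac{A(B+A)}{2}$ are consistent.
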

\begin{proof}
    Since $2 \Phi'(0) <  \Phi''(0) + 2 (\Phi'(0))^2$, the inequality (\ref{FSBH3}) gives
\begin{equation}\label{eqnthm2}
    \bigg\vert   2 \frac{\partial \rho}{\partial z} \frac{ D^3 G(0) (z^3)}{3!  \rho^3 (z)} \bigg\vert \leq  \frac{ \Phi'(0)}{2}   \left( \frac{1}{2} \frac{\Phi''(0)}{\Phi'(0)} + \Phi'(0) \right).
\end{equation}
     Also, since  $ 2 \Phi'(0)  + \Phi''(0) \leq 6 (\Phi'(0))^2 ,$ the inequality (\ref{FSBH3}) for $\lambda =2$ gives
\begin{equation}\label{FS2H3}
     \bigg\vert 2 \frac{\partial \rho}{\partial z} \frac{D^3 G(0) (z^3)}{3!  \rho^3(z)} - 2 \bigg( 2 \frac{\partial \rho}{\partial z}  \frac{D^2 G(0) (z^2)}{2! \rho^2(z)} \bigg)^2 \bigg\vert \leq  \frac{\Phi'(0) }{2}   \bigg( 3  \Phi'(0)  - \frac{1}{2} \frac{\Phi''(0)}{\Phi'(0)} \bigg).
\end{equation}
    Using the estimates given in (\ref{a2H3}) and (\ref{eqnthm2}), and the bound given by (\ref{FS2H3}) in the following inequality
\begin{equation*}\label{T31B}
    \vert 2 b_2^2 b_3  - b_3^2 - 2 b_2^2 + 1 \vert \leq  1 + 2 \vert b_2 \vert^2 + \vert b_3\vert \vert b_3 - 2 b_2^2\vert
\end{equation*}
  the required bound is established.

  The result is sharp for the function $G(z)$ given by (\ref{extHB}). As for this function, we have
    $b_2 = i \Phi'(0)$ and
   $ b_3  = -( \Phi''(0) + 2(\Phi'(0))^2 )/4$  from (\ref{b2H3}) and (\ref{b3H3}), respectively. Therefore
   $$   1 - b_3 ( b_3 - 2 b_2^2 ) - 2 b_2^2  = 1 + 2 ( \Phi'(0) )^2 + \frac{ ( \Phi'(0) )^2}{4} \bigg( 3 \Phi'(0)  - \frac{\Phi''(0)}{2 \Phi'(0)} \bigg)  \bigg( \frac{\Phi''(0)}{2 \Phi'(0)} + \Phi'(0) \bigg),$$
    which proves the sharpness of the bound.
\end{proof}
   In case of $\Omega = \mathbb{U}^n$, Theorem \ref{thm1H3} and Theorem \ref{thm2H3} directly give the following results, which we state here without proof.
\begin{theorem}
   Let $g\in \mathcal{H}(\mathbb{U}^n, \mathbb{C})$ with $g(0)=1$ and $G(z)= z g(z)$. If $J^{-1}_G(z) G(z) \in \mathscr{M}_\Phi$ such that $\Phi$ satisfies
    $$ \vert \Phi''(0) + 2 (\Phi'(0))^2 \vert  \geq 2  \Phi'(0) > 0, $$
    then
\begin{align*}
   & \bigg\vert    \bigg( \frac{1}{\| z \|^4}  \frac{ D^3 G(0) (z^3)}{3! } \bar{z} \bigg)^2 -    \bigg(\frac{1}{\| z \|^3} \frac{D^2 G(0) (z^2)}{2! } \bar{z} \bigg)^2  \bigg\vert \\
    & \quad \quad \quad \quad \quad \quad \quad \quad \quad \quad \quad \quad \quad \quad \quad \leq ( \Phi'(0) )^2 +  \frac{( \Phi'(0))^2}{4}   \left( \frac{1}{2} \frac{\Phi''(0)}{\Phi'(0)} + \Phi'(0) \right)^2.
\end{align*}
   The bound is sharp.
\end{theorem}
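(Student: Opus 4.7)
The plan is to specialize Theorem \ref{thm1H3} to the Euclidean unit ball and translate the derivative $\partial\rho/\partial z$ into the Hermitian inner product. Since the hypothesis on $\Phi$ is the same and the right-hand side of the asserted bound is identical to the one in Theorem \ref{thm1H3}, no fresh estimate is needed; the entire task reduces to an identification of the geometric quantities.

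First I would recall that when $\Omega=\mathbb{U}^n$, the Minkowski functional is $\rho(z)=\|z\|=\langle z,z\rangle^{1/2}$, which is $C^{1}$ on $\mathbb{C}^n\setminus\{0\}$, so that Theorem \ref{thm1H3} applies. A direct differentiation gives $\partial\rho(z)/\partial z_k=\bar z_k/(2\|z\|)$, and therefore, for any column vector $v\in\mathbb{C}^n$,
\begin{equation*}
 2\,\frac{\partial\rho(z)}{\partial z}\,v \;=\; \frac{1}{\|z\|}\sum_{k=1}^{n} v_k\,\bar z_k \;=\; \frac{\langle v,z\rangle}{\|z\|},
\end{equation*}
which, in the juxtaposition notation of the statement, is exactly $v\,\bar z/\|z\|$.

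Next, I would substitute $v=D^{2}G(0)(z^{2})/2!$ and $v=D^{3}G(0)(z^{3})/3!$ into this identity and divide by the appropriate powers of $\rho(z)=\|z\|$. This yields
\begin{equation*}
 2\,\frac{\partial\rho}{\partial z}\,\frac{D^{2}G(0)(z^{2})}{2!\,\rho^{2}(z)} \;=\; \frac{1}{\|z\|^{3}}\,\frac{D^{2}G(0)(z^{2})}{2!}\,\bar z,
\end{equation*}
\begin{equation*}
 2\,\frac{\partial\rho}{\partial z}\,\frac{D^{3}G(0)(z^{3})}{3!\,\rho^{3}(z)} \;=\; \frac{1}{\|z\|^{4}}\,\frac{D^{3}G(0)(z^{3})}{3!}\,\bar z.
\end{equation*}
With these two identifications the left-hand side of the present theorem becomes literally the left-hand side appearing in Theorem \ref{thm1H3}.

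Finally, since the hypothesis $|\Phi''(0)+2(\Phi'(0))^{2}|\ge 2\Phi'(0)>0$ is imposed verbatim, Theorem \ref{thm1H3} provides the bound $(\Phi'(0))^{2}+\tfrac{(\Phi'(0))^{2}}{4}\bigl(\tfrac{1}{2}\Phi''(0)/\Phi'(0)+\Phi'(0)\bigr)^{2}$, and this proves the inequality. Sharpness transfers from Theorem \ref{thm1H3} via the extremal mapping \eqref{extHB}, specialized to $\Omega=\mathbb{U}^n$ with $r=1$ and the first coordinate distinguished; one checks that the values in \eqref{b2H3}–\eqref{b3H3} at $z=Ru$ with $u\in\partial\mathbb{U}^n$, $u_{1}=1$, realize equality. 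The only non-clerical step is the derivative computation of $\partial\rho/\partial z$, and since $\rho^{2}(z)=\langle z,z\rangle$ is smooth with $\partial\rho^{2}/\partial z_k=\bar z_k$, this presents no obstacle; there is no genuine analytical difficulty to overcome, only a careful translation of notation.
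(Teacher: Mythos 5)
Your proposal is correct and follows exactly the route the paper intends: the paper states this result without proof, remarking that it ``directly'' follows from Theorem \ref{thm1H3} upon taking $\Omega=\mathbb{U}^n$, and your specialization via $\rho(z)=\|z\|$, $\partial\rho(z)/\partial z_k=\bar z_k/(2\|z\|)$, hence $2\,\frac{\partial\rho}{\partial z}v=\langle v,z\rangle/\|z\|$, is precisely the identification needed, including the transfer of sharpness through the extremal map \eqref{extHB} with $r=1$. No gaps; this is the same argument, with the notational translation carried out explicitly.
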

\begin{theorem}\label{thm4H3}
    Let $g\in \mathcal{H}(\mathbb{U}^n,\mathbb{C})$ with $g(0)=1$ and $G(z) = z g(z)$. If $J^{-1}_G (z) G(z) \in \mathscr{M}_\Phi$ such that $\Phi$ satisfies
    $$ 2 \Phi'(0) - 2 (\Phi'(0))^2 \leq {\Phi''(0)} \leq 6 (\Phi'(0))^2 - 2 \Phi'(0) ,$$
    then
\begin{align*}
     &\vert 2 d_2^2 d_3  - d_3^2 - 2 d_2^2 + 1 \vert   \\
      &\quad\quad\quad \quad\quad \leq  \frac{ ( \Phi'(0) )^2}{4} \bigg( 3 \Phi'(0)  - \frac{\Phi''(0)}{2 \Phi'(0)} \bigg)  \bigg( \frac{\Phi''(0)}{2 \Phi'(0)} + \Phi'(0) \bigg) + 2 ( \Phi'(0) )^2 +1,
\end{align*}
   where
\begin{align}\label{d2d3}
       d_3 =  \frac{1}{\| z \|^4}  \frac{ D^3 G(0) (z^3)}{3! } \bar{z}   \;\; \text{and} \;\; d_2 =  \frac{1}{\| z \|^3} \frac{D^2 G(0) (z^2)}{2! } \bar{z}.
\end{align}
   The bound is sharp.
\end{theorem}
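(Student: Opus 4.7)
The plan is to deduce Theorem \ref{thm4H3} as an immediate specialization of Theorem \ref{thm2H3} to the case $\Omega = \mathbb{U}^n$. The only real task is to translate the intrinsic quantities appearing in Theorem \ref{thm2H3} (which involve the Minkowski functional $\rho$ and the vector $\partial \rho/\partial z$) into the explicit Euclidean expressions appearing in the definitions of $d_2$ and $d_3$.

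First I would identify the Minkowski functional of the Euclidean unit ball. Since $\mathbb{U}^n = \{z \in \mathbb{C}^n : \|z\| < 1\}$, its Minkowski functional is $\rho(z) = \|z\|$, which lies in $C^1(\mathbb{C}^n\setminus\{0\})$. Differentiating the identity $\rho(z)^2 = \sum_{j=1}^n z_j\bar z_j$ with respect to $z_j$ gives $\partial \rho/\partial z_j = \bar z_j/(2\|z\|)$, so that
\[
2\,\frac{\partial \rho}{\partial z} \;=\; \frac{\bar z}{\|z\|}.
\]
Applied to any $v\in\mathbb{C}^n$, this produces $2(\partial\rho/\partial z)\,v = v\,\bar z/\|z\|$. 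Substituting into the formulas (\ref{b2b3}) for $b_2$ and $b_3$ in Theorem \ref{thm2H3} and using $\rho(z) = \|z\|$ yields, up to the shared scalar normalization, exactly the expressions (\ref{d2d3}) for $d_2$ and $d_3$. Likewise the membership condition $\rho(z)/(2(\partial\rho/\partial z)p(z))\in\Phi(\mathbb{U})$ defining $\mathscr{M}_\Phi$ collapses to $\|z\|^2/\langle p(z),z\rangle\in\Phi(\mathbb{U})$, which is precisely the form of $\mathscr{M}_\Phi$ recorded for $\mathbb{U}^n$ in Section 2. Since the hypotheses on $\Phi$ are identical in the two theorems, the inequality of Theorem \ref{thm2H3} transfers verbatim to the claimed bound.

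For sharpness I would take the same extremal mapping (\ref{extHB}) used in the proof of Theorem \ref{thm2H3}, now specialized to $\Omega = \mathbb{U}^n$ (so $r = \sup\{|z_1| : z\in\mathbb{U}^n\} = 1$). Evaluating at $z = Ru$ with $u\in\partial\mathbb{U}^n$ and $u_1 = 1$ (e.g., $u = e_1$), the computations of $D^2G(0)(z^2)$ and $D^3G(0)(z^3)$ carried out in Theorem \ref{thm2H3} produce, via the Euclidean substitution above, $d_2 = i\Phi'(0)$ and $d_3 = -(\Phi''(0)+2(\Phi'(0))^2)/4$. Plugging into $1 - d_3(d_3 - 2d_2^2) - 2d_2^2$ reproduces the right-hand side of the claimed inequality, establishing equality.

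No genuine obstacle arises — the entire argument is a change of coordinates. The only point deserving care is verifying that the factors of $\|z\|$ in the denominators of (\ref{d2d3}) come out with the correct exponents after substituting $2\partial\rho/\partial z = \bar z/\|z\|$ into (\ref{b2b3}); this is a bookkeeping check rather than a conceptual difficulty.
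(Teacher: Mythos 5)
Your proposal is correct and is exactly the route the paper takes: the paper states Theorem \ref{thm4H3} "without proof" as the direct specialization of Theorem \ref{thm2H3} to $\Omega=\mathbb{U}^n$, using $\rho(z)=\|z\|$, $2\,\partial\rho/\partial z = \bar z'/\|z\|$, and the same extremal mapping (\ref{extHB}) with $r=1$ for sharpness. The only point worth noting is that the substitution you carry out shows the definitions (\ref{d2d3}) correspond to (\ref{b2b3}) without the extra factor $2$ printed there, which is evidently a typo in (\ref{b2b3}) rather than a gap in your argument.
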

    In sight of remark \ref{rem1}, various choices of $\Phi$ in Theorem \ref{thm1H3} to Theorem \ref{thm4H3} lead to the following results for different subclasses of $\mathcal{S}(\Omega)$.
\begin{corollary}
    If $g: \Omega \rightarrow \mathbb{C}$ and $G(z) = z g(z) \in \mathcal{S}^*(\Omega)$, then
\begin{align*}
    \bigg\vert    \bigg( 2 \frac{\partial \rho}{\partial z}  \frac{ D^3 G(0) (z^3)}{3!  \rho^3 (z)} \bigg)^2 - & \bigg(2 \frac{\partial \rho}{\partial z} \frac{D^2 G(0) (z^2)}{2! \rho^2 (z)} \bigg)^2  \bigg\vert
   \leq  13
\end{align*}
   and
\begin{align*}
     \vert 2 b_2^2 b_3 & - b_3^2 - 2 b_2^2 + 1 \vert  \leq 24,
\end{align*}
   where $b_2$ and $b_3$ are given by (\ref{b2b3}).
      All these estimations are sharp.
\end{corollary}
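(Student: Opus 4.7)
The plan is to specialize Theorem \ref{thm1H3} and Theorem \ref{thm2H3} to the Koebe-type function $\Phi(z) = (1+z)/(1-z)$. By Remark \ref{rem1}, this particular $\Phi$ exactly characterizes the starlike class: $G \in \mathcal{S}^*(\Omega)$ if and only if $J_G^{-1}(z) G(z) \in \mathscr{M}_\Phi$. Thus the hypothesis $G \in \mathcal{S}^*(\Omega)$ of the corollary coincides with the hypothesis $J_G^{-1}(z) G(z) \in \mathscr{M}_\Phi$ used in both theorems, for this fixed $\Phi$.

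First I would compute the derivatives of $\Phi$ at the origin directly from $\Phi(z)=(1+z)/(1-z)$, obtaining $\Phi'(0) = 2$ and $\Phi''(0) = 4$. Next I would verify the analytic hypotheses on $\Phi$ in the two theorems. For Theorem \ref{thm1H3} the condition $|\Phi''(0) + 2(\Phi'(0))^2| \geq 2\Phi'(0) > 0$ becomes $|4+8| = 12 \geq 4 > 0$, which holds. For Theorem \ref{thm2H3} the chain $2\Phi'(0) - 2(\Phi'(0))^2 \leq \Phi''(0) \leq 6(\Phi'(0))^2 - 2\Phi'(0)$ becomes $-4 \leq 4 \leq 20$, which also holds. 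Hence both theorems are applicable.

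Then it only remains to substitute these numerical values into the explicit bounds. The bound in Theorem \ref{thm1H3} becomes
\[
(\Phi'(0))^2 + \frac{(\Phi'(0))^2}{4}\left(\frac{1}{2}\frac{\Phi''(0)}{\Phi'(0)} + \Phi'(0)\right)^2 = 4 + 1 \cdot (1+2)^2 = 13,
\]
and the bound in Theorem \ref{thm2H3} becomes
\[
1 + 2(\Phi'(0))^2 + \frac{(\Phi'(0))^2}{4}\left(3\Phi'(0) - \frac{\Phi''(0)}{2\Phi'(0)}\right)\left(\frac{\Phi''(0)}{2\Phi'(0)} + \Phi'(0)\right) = 1 + 8 + 5\cdot 3 = 24.
\]
Sharpness is inherited for free from the extremal mapping in (\ref{extHB}), which for $\Phi(z)=(1+z)/(1-z)$ reduces to a Koebe-type map on $\Omega$ attaining equality simultaneously in both theorems.

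There is no real obstacle here beyond recognizing the correspondence supplied by Remark \ref{rem1}; the only way this could go wrong is if the chosen $\Phi$ failed one of the two sharper sign hypotheses in Theorem \ref{thm1H3} or Theorem \ref{thm2H3}, but the explicit check above shows both are comfortably satisfied. The arithmetic is completely mechanical once $\Phi'(0)=2$ and $\Phi''(0)=4$ have been computed.
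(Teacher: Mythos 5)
Your proposal is correct and follows exactly the route the paper intends: by Remark \ref{rem1} the choice $\Phi(z)=(1+z)/(1-z)$ (so $\Phi'(0)=2$, $\Phi''(0)=4$) reduces the corollary to Theorems \ref{thm1H3} and \ref{thm2H3}, whose hypotheses you verify and whose bounds evaluate to $13$ and $24$, with sharpness coming from the extremal mapping (\ref{extHB}). Nothing essential differs from the paper's (implicit) argument.
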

\begin{corollary}
    If $g: \Omega \rightarrow \mathbb{C}$ and $G(z) = z g(z) \in \mathcal{S}^*_\alpha(\Omega)$, then
\begin{align*}
    \bigg\vert    \bigg( 2 \frac{\partial \rho}{\partial z}  \frac{ D^3 G(0) (z^3)}{3!  \rho^3 (z)} \bigg)^2 - & \bigg(2 \frac{\partial \rho}{\partial z} \frac{D^2 G(0) (z^2)}{2! \rho^2 (z)} \bigg)^2  \bigg\vert
   \leq (1 - \alpha)^2 ( 4 \alpha^2 - 12 \alpha + 13)
\end{align*}
   and for $\alpha \in [0,2/3]$,
\begin{align*}
     \vert 2 b_2^2 b_3 & - b_3^2 - 2 b_2^2 + 1 \vert \leq 12 \alpha^4 -52 \alpha^3 + 91 \alpha^2 -74 \alpha + 24,
\end{align*}
   where $b_2$ and $b_3$ are given by (\ref{b2b3}). All these estimations are sharp.
\end{corollary}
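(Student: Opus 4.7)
The plan is to specialize Theorems \ref{thm1H3} and \ref{thm2H3} to the particular choice of $\Phi$ that, by Remark \ref{rem1}, characterizes the class $\mathcal{S}^*_\alpha(\Omega)$, namely
\[
\Phi(z)=\frac{1+(1-2\alpha)z}{1-z}.
\]
Then the corollary becomes a matter of computing $\Phi'(0)$ and $\Phi''(0)$, checking that the hypotheses of both theorems hold in the stated range of $\alpha$, and plugging the values into the two sharp bounds.

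A routine Taylor expansion of $\Phi$ at the origin gives $\Phi'(0)=2(1-\alpha)$ and $\Phi''(0)=4(1-\alpha)$, so in particular $\tfrac{1}{2}\Phi''(0)/\Phi'(0)=1$. For Theorem \ref{thm1H3} the hypothesis $|\Phi''(0)+2(\Phi'(0))^2|\ge 2\Phi'(0)>0$ reduces to $3-2\alpha\ge 1$, which holds for every $\alpha\in[0,1)$. For Theorem \ref{thm2H3} the hypothesis $2\Phi'(0)-2(\Phi'(0))^2\le \Phi''(0)\le 6(\Phi'(0))^2-2\Phi'(0)$ simplifies, after dividing by $4(1-\alpha)$, to $2\alpha-1\le 1\le 5-6\alpha$, i.e.\ exactly $\alpha\in[0,2/3]$, matching the stated range. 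So both theorems apply precisely where the corollary claims.

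Substitution is then direct. The second-order bound of Theorem \ref{thm1H3} becomes
\[
(\Phi'(0))^2+\tfrac{(\Phi'(0))^2}{4}\bigl(1+\Phi'(0)\bigr)^2
=(1-\alpha)^2\bigl[4+(3-2\alpha)^2\bigr]
=(1-\alpha)^2(4\alpha^2-12\alpha+13),
\]
and the third-order bound of Theorem \ref{thm2H3} becomes
\[
1+2(\Phi'(0))^2+\tfrac{(\Phi'(0))^2}{4}(5-6\alpha)(3-2\alpha)
=1+(1-\alpha)^2\bigl[8+(5-6\alpha)(3-2\alpha)\bigr],
\]
which, after expanding $(1-\alpha)^2(12\alpha^2-28\alpha+23)+1$, collapses to $12\alpha^4-52\alpha^3+91\alpha^2-74\alpha+24$, exactly the claimed expression.

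Sharpness is inherited from Theorems \ref{thm1H3} and \ref{thm2H3}: the extremal mapping defined by (\ref{extHB}) with this choice of $\Phi$ produces equality in both bounds simultaneously, so no extra construction is required. The only mild obstacle is the bookkeeping in simplifying the third-order expression down to the quartic polynomial in $\alpha$, but this is pure algebra and involves no new ideas.
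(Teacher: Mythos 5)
Your proposal is correct and follows exactly the route the paper intends: the corollary is obtained by taking $\Phi(z)=(1+(1-2\alpha)z)/(1-z)$ in Theorems \ref{thm1H3} and \ref{thm2H3} via Remark \ref{rem1}, and your computations of $\Phi'(0)=2(1-\alpha)$, $\Phi''(0)=4(1-\alpha)$, the verification that the hypotheses hold precisely for $\alpha\in[0,1)$ and $\alpha\in[0,2/3]$ respectively, and the algebraic simplification to $(1-\alpha)^2(4\alpha^2-12\alpha+13)$ and $12\alpha^4-52\alpha^3+91\alpha^2-74\alpha+24$ are all accurate. Sharpness indeed carries over from the extremal mapping (\ref{extHB}) with this $\Phi$, so nothing is missing.
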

\begin{corollary}
    If $g: \Omega \rightarrow \mathbb{C}$ and $G(z) = z g(z) \in \mathcal{SS}^*_\beta(\Omega)$, then for $\beta \in [1/3,1]$, the following sharp inequalities hold:
\begin{align*}
    \bigg\vert    \bigg( 2 \frac{\partial \rho}{\partial z}  \frac{ D^3 G(0) (z^3)}{3!  \rho^3 (z)} \bigg)^2 - & \bigg(2 \frac{\partial \rho}{\partial z} \frac{D^2 G(0) (z^2)}{2! \rho^2 (z)} \bigg)^2  \bigg\vert
   \leq 9 \beta^4 + 4 \beta^2
\end{align*}
   and
\begin{align*}
     \vert 2 b_2^2 b_3 & - b_3^2 - 2 b_2^2 + 1 \vert \leq 15 \beta^4 + 8 \beta^2 + 1,
\end{align*}
   where $b_2$ and $b_3$ are given by (\ref{b2b3}).
\end{corollary}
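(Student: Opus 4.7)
The plan is to specialise Theorems \ref{thm1H3} and \ref{thm2H3} to the choice $\Phi(z) = \bigl((1+z)/(1-z)\bigr)^{\beta}$, since by Remark \ref{rem1} the hypothesis $G \in \mathcal{SS}^*_\beta(\Omega)$ is exactly the condition $J_G^{-1}(z) G(z) \in \mathscr{M}_\Phi$ for this particular $\Phi$. First I would compute the needed values at the origin by logarithmic differentiation: setting $\psi = \log \Phi$, one gets $\psi'(z) = 2\beta/(1-z^2)$, hence $\Phi'(0) = 2\beta$ and $\Phi''(0) = (\psi'(0))^2 = 4\beta^2$ (the $\psi''(0)$ contribution vanishes).

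Next I would verify that the hypotheses of each theorem cut out exactly the range $\beta \in [1/3,1]$ displayed in the statement. For Theorem \ref{thm1H3}, the condition $\vert\Phi''(0) + 2(\Phi'(0))^2\vert \geq 2\Phi'(0) > 0$ reduces to $12\beta^2 \geq 4\beta$, i.e.\ $\beta \geq 1/3$. For Theorem \ref{thm2H3}, the two-sided inequality $2\Phi'(0) - 2(\Phi'(0))^2 \leq \Phi''(0) \leq 6(\Phi'(0))^2 - 2\Phi'(0)$ becomes $4\beta - 8\beta^2 \leq 4\beta^2 \leq 24\beta^2 - 4\beta$, which amounts to $\beta \geq 1/3$ on the left and $\beta \geq 1/5$ on the right. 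Thus both theorems apply throughout $\beta \in [1/3, 1]$.

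Finally I would substitute into the two right-hand sides. The bound in Theorem \ref{thm1H3} becomes $(\Phi'(0))^2 + \tfrac{(\Phi'(0))^2}{4}\bigl(\tfrac{1}{2}\Phi''(0)/\Phi'(0) + \Phi'(0)\bigr)^2 = 4\beta^2 + \beta^2(3\beta)^2 = 9\beta^4 + 4\beta^2$. For Theorem \ref{thm2H3}, note that $3\Phi'(0) - \Phi''(0)/(2\Phi'(0)) = 5\beta$ and $\Phi''(0)/(2\Phi'(0)) + \Phi'(0) = 3\beta$, so the bound collapses to $1 + 2(\Phi'(0))^2 + \tfrac{(\Phi'(0))^2}{4}\cdot 5\beta \cdot 3\beta = 1 + 8\beta^2 + 15\beta^4$. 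Sharpness is inherited from the sharpness assertions in Theorems \ref{thm1H3} and \ref{thm2H3}, realised by the explicit mapping (\ref{extHB}) with this particular $\Phi$.

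There is no real obstacle beyond careful bookkeeping; the one thing worth highlighting is that the threshold $\beta = 1/3$ is precisely where both theorem hypotheses simultaneously activate, which is exactly what pins down the range appearing in the corollary.
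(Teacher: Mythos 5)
Your proposal is correct and follows exactly the route the paper intends: specialize Theorems \ref{thm1H3} and \ref{thm2H3} to $\Phi(z)=\bigl((1+z)/(1-z)\bigr)^{\beta}$ via Remark \ref{rem1}, with $\Phi'(0)=2\beta$, $\Phi''(0)=4\beta^2$, the hypotheses reducing to $\beta\geq 1/3$ (and $\beta\geq 1/5$), and the bounds collapsing to $9\beta^4+4\beta^2$ and $15\beta^4+8\beta^2+1$, with sharpness inherited from the extremal map (\ref{extHB}). All computations check out.
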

   If $\Omega = \mathbb{U}^n$, we obtain the following bounds.
\begin{corollary}
 If $g: \mathbb{U}^n \rightarrow \mathbb{C}$ and $G(z) = z g(z) \in \mathcal{S}^*(\mathbb{U}^n)$, then
\begin{align}\label{crl1H3}
   \bigg\vert    \bigg( \frac{1}{\| z \|^4}  \frac{ D^3 G(0) (z^3)}{3! } \bar{z} \bigg)^2 -    \bigg(\frac{1}{\| z \|^3} \frac{D^2 G(0) (z^2)}{2! } \bar{z} \bigg)^2  \bigg\vert
   \leq 13
\end{align}
   and
\begin{align}\label{crl2H3}
     \vert 2 d_2^2 d_3 & - d_3^2 - 2 d_2^2 + 1 \vert \leq 24,
\end{align}
   where  $d_2$ and $d_3$ are given by (\ref{d2d3}).
   All these bounds are sharp.
\end{corollary}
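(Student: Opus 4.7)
The plan is to deduce this corollary as a direct specialization of the two unit-ball theorems of this section (the unnumbered theorem preceding Theorem~\ref{thm4H3} and Theorem~\ref{thm4H3} itself) to the particular $\Phi$ that characterizes starlikeness. By Remark~\ref{rem1}, the choice $\Phi(z) = (1+z)/(1-z)$ translates the hypothesis $J_G^{-1}(z)G(z) \in \mathscr{M}_\Phi$ into $G \in \mathcal{S}^*(\mathbb{U}^n)$, which is exactly the given assumption; no new analytic work is needed and the corollary amounts to a numerical instantiation.

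The first step is to compute the relevant Taylor coefficients of $\Phi$ at the origin: a direct differentiation yields $\Phi'(0) = 2$ and $\Phi''(0) = 4$. Next I would verify the two hypothesis inequalities required by the parent theorems. For the first theorem one has $|\Phi''(0) + 2(\Phi'(0))^2| = 12 \geq 4 = 2\Phi'(0)$, so the assumption is satisfied. For Theorem~\ref{thm4H3} the double inequality $2\Phi'(0) - 2(\Phi'(0))^2 \leq \Phi''(0) \leq 6(\Phi'(0))^2 - 2\Phi'(0)$ collapses to $-4 \leq 4 \leq 20$, which also holds.

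The remaining work is pure substitution. The right-hand side of the first theorem, namely $(\Phi'(0))^2 + \frac{(\Phi'(0))^2}{4}\bigl(\frac{1}{2}\frac{\Phi''(0)}{\Phi'(0)} + \Phi'(0)\bigr)^2$, evaluates to $4 + 1 \cdot (1+2)^2 = 13$, which establishes (\ref{crl1H3}). The right-hand side of Theorem~\ref{thm4H3}, namely $1 + 2(\Phi'(0))^2 + \frac{(\Phi'(0))^2}{4}\bigl(3\Phi'(0) - \frac{\Phi''(0)}{2\Phi'(0)}\bigr)\bigl(\frac{\Phi''(0)}{2\Phi'(0)} + \Phi'(0)\bigr)$, evaluates to $1 + 8 + 1 \cdot 5 \cdot 3 = 24$, which establishes (\ref{crl2H3}).

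For sharpness, I would invoke the extremal mapping (\ref{extHB}), specialized at the present $\Phi$. On $\mathbb{U}^n$ with the Euclidean norm, $r = \sup\{|z_1| : z \in \mathbb{U}^n\} = 1$, and a short evaluation of the defining integral yields the extremal $G(z) = z(1 - iz_1)^{-2}$, a higher-dimensional analogue of the rotated Koebe function. Since the parent theorems achieve equality at this $G$ via (\ref{b2H3})–(\ref{b3H3}), equality propagates to both bounds in the corollary. I do not anticipate any substantive obstacle here; the only points to watch are the correct identification $r = 1$ for $\mathbb{U}^n$ and making sure the two auxiliary inequalities on $\Phi'(0),\Phi''(0)$ are verified before invoking each parent theorem.
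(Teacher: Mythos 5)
Your proposal is correct and follows essentially the same route as the paper, which obtains this corollary by specializing the two unit-ball theorems to $\Phi(z)=(1+z)/(1-z)$ via Remark \ref{rem1}, so that $\Phi'(0)=2$, $\Phi''(0)=4$ give the bounds $13$ and $24$, with sharpness coming from the extremal mapping (\ref{extHB}) (here $G(z)=z(1-iz_1)^{-2}$, $r=1$). Your verification of the two hypothesis inequalities and the numerical substitutions are all accurate.
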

\begin{remark}
     When $n=1$, (\ref{crl1H3}) and (\ref{crl2H3}) reduce to the following:
\begin{align*}
    \bigg\vert \bigg( \frac{G^{(3)}(0)}{3!} \bigg)^2 - \bigg( \frac{G''{(0)}}{2!}  \bigg)^2  \bigg\vert \leq 13
\end{align*}
   and
\begin{align*}
     \vert 2 d_2^2 d_3 & - d_3^2 - 2 d_2^2 + 1 \vert \leq 24,
\end{align*}
   where
\begin{align*}
       d_3 =   \frac{  G^{(3)}(0)}{3! }    \;\; \text{and} \;\; d_2 =  \frac{ G''(0) }{2! } .
\end{align*}
    which are equivalent to the bounds given in Theorem A.
\end{remark}
\begin{corollary}
 If $g: \mathbb{U}^n \rightarrow \mathbb{C}$ and $G(z) = z g(z) \in \mathcal{S}^*_\alpha(\mathbb{U}^n)$, then
\begin{align}\label{crl3H3}
   \bigg\vert    \bigg( \frac{1}{\| z \|^4}  \frac{ D^3 G(0) (z^3)}{3! } \bar{z} \bigg)^2 -    \bigg(\frac{1}{\| z \|^3} \frac{D^2 G(0) (z^2)}{2! } \bar{z} \bigg)^2  \bigg\vert
   \leq (1 - \alpha)^2 ( 4 \alpha^2 - 12 \alpha + 13)
\end{align}
   and for $\alpha \in [0,2/3]$,
\begin{align}\label{crl4H3}
     \vert 2 d_2^2 d_3 & - d_3^2 - 2 d_2^2 + 1 \vert \leq 12 \alpha^4 -52 \alpha^3 + 91 \alpha^2 -74 \alpha + 24,
\end{align}
    where  $d_2$ and $d_3$ are given by (\ref{d2d3}). All these estimations are sharp.
\end{corollary}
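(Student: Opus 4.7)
The plan is to derive both inequalities as direct applications of Theorem \ref{thm4H3} (for the unit ball $\mathbb{U}^n$) together with the theorem just preceding it, by making the specific choice of $\Phi$ that, according to Remark \ref{rem1}, produces the class $\mathcal{S}^*_\alpha(\mathbb{U}^n)$. That choice is
$$\Phi(z) = \frac{1 + (1-2\alpha)z}{1-z},$$
so the first task is to compute $\Phi'(0)$ and $\Phi''(0)$. A short differentiation gives $\Phi'(0) = 2(1-\alpha)$ and $\Phi''(0) = 4(1-\alpha)$, and one immediately checks that $\Phi$ satisfies the standing Assumption (biholomorphic on $\mathbb{U}$, $\Phi(0)=1$, $\Phi'(0)>0$, $\Phi''(0)\in\mathbb{R}$, positive real part). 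These two numerical data drive the entire calculation.

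Next I would verify the hypotheses of the two theorems for this $\Phi$. For the first inequality, the condition $|\Phi''(0) + 2(\Phi'(0))^2| \geq 2\Phi'(0) > 0$ becomes $4(1-\alpha)(3-2\alpha) \geq 4(1-\alpha)$, which holds trivially for $\alpha \in [0,1)$. For the second inequality, the condition $2\Phi'(0) - 2(\Phi'(0))^2 \leq \Phi''(0) \leq 6(\Phi'(0))^2 - 2\Phi'(0)$ becomes $4(1-\alpha) - 8(1-\alpha)^2 \leq 4(1-\alpha) \leq 24(1-\alpha)^2 - 4(1-\alpha)$; the left inequality is automatic and the right one simplifies to $1 \leq 3(1-\alpha)$, that is, $\alpha \leq 2/3$, which is exactly the range stated in the corollary. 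This explains the appearance of the restriction $\alpha \in [0,2/3]$ on the third-order bound, and nowhere else.

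With the hypotheses in place, substituting $\Phi'(0) = 2(1-\alpha)$ and $\tfrac{\Phi''(0)}{2\Phi'(0)} = 1$ into the right-hand side of the bound in the unit-ball analogue of Theorem \ref{thm1H3} gives
$$(\Phi'(0))^2 + \tfrac{(\Phi'(0))^2}{4}\bigl(1 + \Phi'(0)\bigr)^2 = 4(1-\alpha)^2 + (1-\alpha)^2(3-2\alpha)^2 = (1-\alpha)^2\bigl(4\alpha^2 - 12\alpha + 13\bigr),$$
and substituting the same data into the bound of Theorem \ref{thm4H3} yields
$$1 + 8(1-\alpha)^2 + (1-\alpha)^2(5-6\alpha)(3-2\alpha) = 12\alpha^4 - 52\alpha^3 + 91\alpha^2 - 74\alpha + 24,$$
after a polynomial expansion. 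These are the two claimed inequalities.

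The sharpness claim is already packaged for us: the extremal mapping $G$ constructed in \eqref{extHB} depends only on $\Phi$, and the equality cases of Theorem \ref{thm4H3} and of its second-order companion are achieved there. Specializing that extremal mapping to the present $\Phi(z) = (1+(1-2\alpha)z)/(1-z)$ produces a $G \in \mathcal{S}^*_\alpha(\mathbb{U}^n)$ realizing equality in both \eqref{crl3H3} and, for $\alpha \in [0,2/3]$, in \eqref{crl4H3}. The only real obstacle I foresee is purely bookkeeping, namely the polynomial expansion that turns $1 + 8(1-\alpha)^2 + (1-\alpha)^2(5-6\alpha)(3-2\alpha)$ into the quartic $12\alpha^4 - 52\alpha^3 + 91\alpha^2 - 74\alpha + 24$; a careful expansion, grouping powers of $\alpha$ one at a time, is what makes the match to the stated right-hand side transparent.
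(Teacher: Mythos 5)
Your proposal is correct and follows exactly the route the paper intends: the corollary is obtained by specializing $\Phi(z)=(1+(1-2\alpha)z)/(1-z)$ (so $\Phi'(0)=2(1-\alpha)$, $\Phi''(0)=4(1-\alpha)$) in the unit-ball versions of Theorems \ref{thm1H3} and \ref{thm2H3}, checking the two hypotheses (the second yielding precisely $\alpha\le 2/3$), and invoking the extremal mapping \eqref{extHB} for sharpness. Your algebraic simplifications to $(1-\alpha)^2(4\alpha^2-12\alpha+13)$ and $12\alpha^4-52\alpha^3+91\alpha^2-74\alpha+24$ both check out.
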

\begin{remark}
     When $n=1$, (\ref{crl3H3}) and (\ref{crl4H3}) reduce to the bounds given in Theorem B.
\end{remark}
\begin{corollary}
 If $g: \mathbb{U}^n  \rightarrow \mathbb{C}$ and $G(z) = z g(z) \in \mathcal{SS}^*_\beta(\mathbb{U}^n)$, then for $\beta \in [1/3,1]$, the following sharp inequalities hold:
\begin{align}\label{crl51H3}
   \bigg\vert    \bigg( \frac{1}{\| z \|^4}  \frac{ D^3 G(0) (z^3)}{3! } \bar{z} \bigg)^2 -    \bigg(\frac{1}{\| z \|^3} \frac{D^2 G(0) (z^2)}{2! } \bar{z} \bigg)^2  \bigg\vert
   \leq 9 \beta^4 + 4 \beta^2
\end{align}
   and
\begin{align}\label{crl5H3}
     \vert 2 d_2^2 d_3 & - d_3^2 - 2 d_2^2 + 1 \vert \leq 15 \beta^4 + 8 \beta^2 + 1,
\end{align}
    where  $d_2$ and $d_3$ are given by (\ref{d2d3}).
\end{corollary}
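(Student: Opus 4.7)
The plan is to derive both inequalities as direct specializations of Theorem~3.3 and Theorem~\ref{thm4H3} with the choice $\Phi(z) = \bigl(\tfrac{1+z}{1-z}\bigr)^{\beta}$. By Remark~\ref{rem1} this is exactly the function of $\mathscr{M}_\Phi$ that corresponds to the class $\mathcal{SS}^*_\beta(\mathbb{U}^n)$, so once the analytic hypotheses on $\Phi$ in the two theorems are verified, the bounds will follow by plugging in.

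First I would compute $\Phi'(0)$ and $\Phi''(0)$. Writing $\Phi(z) = \exp\bigl(\beta \log \tfrac{1+z}{1-z}\bigr)$ and using $\log \tfrac{1+z}{1-z} = 2z + \tfrac{2}{3}z^3 + \cdots$, one obtains $\Phi'(0) = 2\beta$ and $\Phi''(0) = 4\beta^2$; Assumption~2.3 is then automatic. I would next verify the hypotheses hold precisely on $\beta \in [1/3,1]$: the condition $|\Phi''(0) + 2(\Phi'(0))^2| \geq 2\Phi'(0) > 0$ of Theorem~3.3 becomes $12\beta^2 \geq 4\beta$, i.e.\ $\beta \geq 1/3$, while the two-sided condition of Theorem~\ref{thm4H3} reads $4\beta - 8\beta^2 \leq 4\beta^2 \leq 24\beta^2 - 4\beta$, whose halves reduce respectively to $\beta \geq 1/3$ and $\beta \geq 1/5$; both are satisfied on $[1/3,1]$.

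Substituting $\Phi'(0)=2\beta$ and $\Phi''(0)=4\beta^2$ into the right-hand sides of the two theorems and simplifying then yields
\[
 (\Phi'(0))^2 + \tfrac{(\Phi'(0))^2}{4}\bigl(\tfrac{1}{2}\tfrac{\Phi''(0)}{\Phi'(0)} + \Phi'(0)\bigr)^2 = 4\beta^2 + \beta^2(3\beta)^2 = 9\beta^4 + 4\beta^2
\]
and
\[
 1 + 2(\Phi'(0))^2 + \tfrac{(\Phi'(0))^2}{4}\bigl(3\Phi'(0) - \tfrac{\Phi''(0)}{2\Phi'(0)}\bigr)\bigl(\tfrac{\Phi''(0)}{2\Phi'(0)} + \Phi'(0)\bigr) = 1 + 8\beta^2 + \beta^2(5\beta)(3\beta) = 15\beta^4 + 8\beta^2 + 1,
\]
which match the two claimed bounds exactly. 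Sharpness will be inherited directly from the sharpness of Theorem~3.3 and Theorem~\ref{thm4H3} via the extremal mapping~(\ref{extHB}) with this particular $\Phi$.

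The whole argument is essentially mechanical once $\Phi'(0)$ and $\Phi''(0)$ are correctly extracted; the only point requiring care is noticing that $\beta \geq 1/3$ — rather than the looser $\beta \geq 1/5$ coming from the right half of the Theorem~\ref{thm4H3} hypothesis — is the binding constraint, which is precisely what forces the stated range $\beta \in [1/3,1]$.
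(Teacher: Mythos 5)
Your proposal is correct and is exactly the route the paper intends: the corollary is obtained by specializing the $\mathbb{U}^n$ theorems with $\Phi(z)=\left(\frac{1+z}{1-z}\right)^{\beta}$ via Remark \ref{rem1}, and your values $\Phi'(0)=2\beta$, $\Phi''(0)=4\beta^2$, the verification that the hypotheses reduce to $\beta\geq 1/3$ (with $\beta\geq 1/5$ from the right-hand inequality being non-binding), and the resulting bounds $9\beta^4+4\beta^2$ and $15\beta^4+8\beta^2+1$ all check out. Sharpness indeed carries over from the extremal mapping (\ref{extHB}) with this choice of $\Phi$, so nothing is missing.
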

\begin{remark}
     When $n=1$, (\ref{crl51H3}) and (\ref{crl5H3}) reduce to the bounds given in Theorem C.
\end{remark}
\section*{Declarations}
\subsection*{Funding}
The work of Surya Giri is supported by University Grant Commission, New Delhi, India  under UGC-Ref. No. 1112/(CSIR-UGC NET JUNE 2019).
\subsection*{Conflict of interest}
	The authors declare that they have no conflict of interest.
\subsection*{Author Contribution}
    Each author contributed equally to the research and preparation of the manuscript.
\subsection*{Data Availability} Not Applicable.
\noindent

\end{document}